\documentclass[11pt,a4paper]{article}

\usepackage[T1]{fontenc}
\usepackage{lmodern}
\usepackage[margin=2.6cm]{geometry}
\usepackage{amsmath,amssymb,amsthm,mathtools}
\usepackage{booktabs,array}
\usepackage{enumitem}
\usepackage{needspace}
\usepackage{xcolor}
\usepackage{microtype}
\usepackage[colorlinks=true,linkcolor=blue!60!black,citecolor=blue!60!black,urlcolor=blue!60!black]{hyperref}

\setlist{nosep}
\allowdisplaybreaks

\newtheorem{theorem}{Theorem}[section]
\newtheorem{proposition}[theorem]{Proposition}
\newtheorem{lemma}[theorem]{Lemma}

\theoremstyle{definition}

\theoremstyle{remark}

\newcommand{\C}{\mathbb C}
\newcommand{\F}{\mathbb F}
\newcommand{\Q}{\mathbb Q}
\newcommand{\Z}{\mathbb Z}

\newcommand{\calB}{\mathcal B}

\newcommand{\Dim}{\operatorname{Dim}}

\title{A Multiplicative Horn Obstruction to the Unitary\\
Categorification of a Rank-Twelve Modular Datum}
\author{
  Jiuhe Liu\\
  \small Qiuzhen College, Tsinghua University\\
  \small \href{mailto:liujiuhe21@mails.tsinghua.edu.cn}{liujiuhe21@mails.tsinghua.edu.cn}
  \and
  Xuzhihan Ding\\
  \small Qiuzhen College, Tsinghua University\\
  \small \href{mailto:dingxzh21@mails.tsinghua.edu.cn}{dingxzh21@mails.tsinghua.edu.cn}
}
\date{\today}

\begin{document}
\maketitle

\begin{abstract}
We study the rank-twelve potential modular datum denoted
$12^{39,406}_{4,1276.}$ in the low-rank classification tables.  Its global
dimension is $(1287+351\sqrt{13})/2$, and its fusion ring agrees with that of
the known rank-twelve Haagerup modular category.  The datum satisfies the
modular relations, has nonnegative integral Verlinde coefficients, and passes
several familiar genus-one necessary tests.  We prove,
nevertheless, that it cannot be realized by a unitary modular tensor category.

The obstruction already occurs in the representation of the three-strand
braid group on $\operatorname{Hom}(C_{[10]},B^{\otimes3})$.  Generalized
Frobenius--Schur indicators determine the eigenvalues of the two-strand
generator on this twelve-dimensional space.  The central relation in $B_3$
then reduces the problem to a product of three prescribed unitary conjugacy
classes.  A sixfold symmetry reduces the $1302$ determinant-compatible cases
to $217$ representatives.  Rank-one inequalities leave only $17$ of these,
and ten explicitly displayed degree-zero rank-two Schubert inequalities
exclude the remaining cases.  A violated inequality with gap $2/117$ is
worked out explicitly.

All calculations used in the obstruction are exact.  The argument uses
unitarity essentially and leaves non-unitary categorification open.
\end{abstract}

\tableofcontents

\section{Introduction}

Modular data are among the most accessible invariants of a modular tensor
category.  A pair of matrices $(S,T)$ records the mapping-class-group action
at genus one, and the Verlinde formula recovers the fusion coefficients from
$S$.  The converse problem is substantially harder: matrices satisfying the
modular relations, positivity, integrality, and standard arithmetic
constraints need not arise from a category.  Following common usage, we call
such a pair a \emph{potential modular datum} until a compatible fusion
category, associator, braiding, and ribbon structure have been constructed.

This paper treats one entry in the integral rank-twelve part of the
classification of Ng, Rowell, and Wen \cite{NgRowellWenClassification}.  The
entry is labelled
\[
  12^{39,406}_{4,1276.}
\]
and comes from the metric-group construction of Grossman and Izumi
\cite{GrossmanIzumi}.  It shares its based fusion ring with the modular datum
of the Drinfeld center of the Haagerup fusion category, but its twists are
different.  Direct genus-one tests do not decide whether it is categorifiable.

Our main result gives a negative answer in the unitary setting.

\begin{theorem}[Main theorem]\label{thm:main}
There is no unitary modular tensor category whose modular data are the
potential modular datum $12^{39,406}_{4,1276.}$.
\end{theorem}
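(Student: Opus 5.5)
We argue by contradiction: suppose $\calB$ is a unitary modular tensor category realizing $12^{39,406}_{4,1276.}$. The abstract already fixes the route. We take a simple object $B$ (the one singled out in the datum) and the target simple $C_{[10]}$, and study the twelve-dimensional multiplicity space $V=\operatorname{Hom}(C_{[10]},B^{\otimes 3})$. Its dimension is computed from the Verlinde coefficients as $\sum_{X} N_{BB}^{X}N_{XB}^{C_{[10]}}=12$. The braiding gives a representation of $B_3$ on $V$, with generators $\sigma_1,\sigma_2$ acting by $c_{B,B}\otimes\mathrm{id}$ and $\mathrm{id}\otimes c_{B,B}$. Unitarity of $\calB$ makes this representation unitary with respect to the natural inner product on Hom spaces.

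\textbf{Step 1: the spectrum of $\sigma_1$.} Decompose $V=\bigoplus_X \operatorname{Hom}(X,B\otimes B)\otimes\operatorname{Hom}(C_{[10]},X\otimes B)$. On each summand, $\sigma_1$ acts on $\operatorname{Hom}(X,B^{\otimes2})$ with eigenvalues $\pm\sqrt{\theta_X}/\theta_B$. When the multiplicity is one, the sign is determined up to the data. When $N_{BB}^X>1$, the multiplicities of the two square-root branches are read off from the generalized Frobenius--Schur indicators $\nu_{2,k}$, which are computable from $S$ and $T$ alone via the Ng--Schauenburg formula. This yields an explicit eigenvalue multiset for $\sigma_1$, hence also for $\sigma_2$, which is conjugate to it.

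\textbf{Step 2: the central relation.} The full twist $(\sigma_1\sigma_2)^3$ acts on $V$ as the scalar $\theta_{C_{[10]}}/\theta_B^{3}$. Set $A=\sigma_1$, $B'=\sigma_2$ and $C=(\sigma_1\sigma_2)^{-1}$ up to this scalar. The eigenvalues of $\sigma_1\sigma_2$ are cube roots of the scalar, but their multiplicities are unknown, which leaves finitely many possibilities. Requiring $\det(\sigma_1\sigma_2)=\det(\sigma_1)^2$ cuts these down; the paper counts 1302 determinant-compatible cases. Each case is an instance of the multiplicative Horn problem: do there exist unitaries in prescribed conjugacy classes $\mathcal C_1,\mathcal C_2,\mathcal C_3$ with product $1$? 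A symmetry, such as multiplying all three cube-root labels by a cube root of unity together with a reversal, reduces the count to 217 orbit representatives.

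\textbf{Step 3: Agnihotri--Woodward--Belkale inequalities.} Write the eigenvalues as $e^{2\pi i\alpha_j}$ with $\alpha\in[0,1)$ in decreasing order. A product of unitaries equal to $1$ exists iff the quantum Schubert inequalities hold: for every nonzero Gromov--Witten invariant $\langle\sigma_I,\sigma_J,\sigma_K\rangle_d$ on $\mathrm{Gr}(r,12)$, we need $\sum_I\alpha+\sum_J\beta+\sum_K\gamma\le d$. We first test the rank-one ($r=1$) inequalities, whose invariants are explicit; this leaves 17 cases. For those, we search degree-zero $r=2$ triples with classical Littlewood--Richardson coefficient $c_{\lambda\mu}^{\nu}\ne0$, which is easy to certify, until each case violates one. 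Ten such inequalities suffice, and all checks are exact rational arithmetic on the $\alpha$'s.

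\textbf{Main obstacle.} I expect the hardest part to be Step 1, namely pinning down the signs and multiplicities rigorously from the indicators without hidden choices such as gauge or the choice of square roots. The case explosion in Step 2 is mechanical but must be organized carefully enough that the surviving cases are certifiably covered by the displayed inequalities.
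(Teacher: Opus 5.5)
\paragraph{The gap is in Step~2.} The triple you give to the multiplicative Horn problem, $\sigma_1\cdot\sigma_2\cdot(\sigma_1\sigma_2)^{-1}=1$, records only two facts. First, $\rho(\sigma_1)$ and $\rho(\sigma_2)$ have the spectrum~\eqref{eq:C10spectrum}. Second, $Y=\sigma_1\sigma_2$ satisfies $Y^3=\theta_{C_{[10]}}I$. It never uses the braid relation itself, which is equivalent to $\sigma_2=Y\sigma_1Y^{-1}$, or to $X^2=Y^3$ for $X=\sigma_1\sigma_2\sigma_1$.

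This relaxed problem is not merely weaker; it appears to be solvable, in which case no Horn inequality can exclude it. Here is a construction. Split $\C^{12}$ into four $3$-dimensional blocks. On each block let $Y$ have eigenvalues $\zeta_9^2,\zeta_9^5,\zeta_9^8$, the cube roots of $\zeta_{78}^{52}$, where $\zeta_9=e^{2\pi i/9}$. Give $(\sigma_1,\sigma_2)$ the $\zeta_{78}$-exponents
\[
(\{0,26,52\},\{3,75,52\}),\ (\{3,75,52\},\{0,26,52\}),\ (\{9,69,26\},\{26,27,51\}),\ (\{26,27,51\},\{9,69,26\})
\]
on the four blocks. Determinants match blockwise.
\begin{itemize}
\item The first two blocks are solvable. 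If $\mathcal{K}$ is the conjugacy class of $\operatorname{diag}(1,\omega,\omega^2)$, then $\mathcal{K}\cdot\mathcal{K}=SU(3)$, because it contains the three central elements and its alcove image is convex.
\item For the third block I checked the rank-one and rank-two $SU(3)$ Agnihotri--Woodward inequalities by hand; all hold, the tightest with slack $1/234$. You should re-verify this tight case, since the slack is small.
\item The fourth block follows by replacing $(A_1,A_2)$ with $(A_2,A_2^{-1}A_1A_2)$.
\end{itemize}
Each factor then has exactly the spectrum~\eqref{eq:C10spectrum}. So the case where $\sigma_1\sigma_2$ has multiplicities $(4,4,4)$ would satisfy every inequality, and your Step~3 cannot reach a contradiction.

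\paragraph{The missing idea is the central reduction of Lemma~\ref{lem:central}.} Both $X=\sigma_1\sigma_2\sigma_1$ and $Y=\sigma_1\sigma_2$ have central powers, $X^2=Y^3=\theta_t$, and $\sigma_1=Y^{-1}X$. Normalizing gives $P^3=Q^2=I$ and $\rho(\sigma_1)=\lambda P^{-1}Q$ with $\lambda^6=\theta_t$. Since $B_3=\langle x,y\mid x^2=y^3\rangle$, the factorization $P^{-1}QD^{-1}=I$ is equivalent to the representation, not a relaxation of it.

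Its unknowns are:
\begin{itemize}
\item the multiplicities $(m_0,m_1,m_2)$ of $P$;
\item the multiplicity $\nu$ of $-1$ for $Q$;
\item the branch of $\lambda$.
\end{itemize}
This is where $1302=6\cdot7\cdot31$ and the $\Z/6$ symmetry $(P,Q)\mapsto(\omega^2P,-Q)$ come from. Your parametrization has only the $31$ determinant-compatible multiplicity vectors of $\sigma_1\sigma_2$, with no $\nu$ and no $\lambda$. So the counts $1302$, $217$, $17$ and the ten rank-two certificates you quote belong to a different problem.

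\paragraph{Two smaller points.} First, the Agnihotri--Woodward inequalities must be applied to $SU(12)$-normalized alcove coordinates (zero sum, spread at most one, up to the central action), not to raw phases in $[0,1)$. Second, the case $N_{BB}^X>1$ in Step~1 never arises, because every channel of $B\otimes B$ has multiplicity one. The spectrum of $\sigma_1$ therefore comes directly from the second indicators, as in Proposition~\ref{prop:Rsigns}. Step~1 is not the real obstacle.
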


The proof does not solve the pentagon equations.  Instead, it uses a necessary
three-strand braid representation.  Let $B$ be the nontrivial twist-one object
of dimension $\delta+1$ and set
\[
 V_t=\operatorname{Hom}(t,B^{\otimes3}).
\]
The generalized indicators of Ng, Rowell, and Wen determine the eigenvalues
of the braid generator on each $V_t$ \cite{NgRowellWenR}.  For
$t=C_{[10]}$, the space has dimension twelve.  After quotienting the central
character of $B_3$, the braid relation asks for unitary matrices $P,Q$ with
$P^3=Q^2=I$ and a prescribed spectrum for $P^{-1}Q$.  The multiplicative Horn
theorem translates the existence of such matrices into quantum Schubert
inequalities \cite{AgnihotriWoodward,Belkale}.  Exact rank-one and rank-two
inequalities already exclude every possible eigenvalue multiplicity and
central branch.

The standard genus-one consistency tests do not detect the obstruction.  The
three-strand representation supplies the additional categorical constraint
needed here.

\subsection*{Organization}
Section~\ref{sec:related} places the result in context and fixes its scope.
Section~\ref{sec:data} describes the candidate and our conventions.
Section~\ref{sec:indicators} gives the braid eigenvalues.  The proof of
Theorem~\ref{thm:main} is in Section~\ref{sec:horn}.
Section~\ref{sec:nonunitary} isolates the remaining non-unitary question.
Appendix~\ref{sec:certificates} describes the exact certificates.

\section{Related work and scope}\label{sec:related}

\subsection{Potential modular data and quadratic categories}
Grossman and Izumi constructed infinite families of potential modular data
from pairs of involutive metric groups whose Gauss sums differ by a sign
\cite{GrossmanIzumi}.  Their construction includes modular data motivated by
centers of generalized Haagerup and near-group categories, but the construction
of $(S,T)$ alone does not produce a fusion category.  The classification of
Ng, Rowell, and Wen determines modular data through rank eleven and integral
modular data through rank twelve \cite{NgRowellWenClassification}.  The word
``classification'' in that setting concerns solutions of the modular-data
constraints; categorification remains a separate problem.

Evans and Gannon computed modular data associated with near-group categories
and their doubles and analyzed the relevant $C^*$-near-group systems
\cite{EvansGannon}.  Their work supplies the principal realized comparison
datum in the present fusion ring, but does not decide whether the candidate
twists are categorifiable.

\subsection{Beyond genus one}
Several works emphasize that modular data do not determine all categorical
structure and that additional realizability constraints are required; see,
for example, \cite{BondersonRowellWang}.  Generalized Frobenius--Schur
indicators provide one bridge from modular data to local braiding data.
Ng, Rowell, and Wen show that, in a suitable gauge, $R$-symbols in
multiplicity-one channels can be recovered from twists, fusion coefficients,
and second generalized indicators \cite{NgRowellWenR}.  We use precisely the
gauge-invariant consequence needed here: the spectrum of a braid generator on
a fusion-path space.

The product-eigenvalue problem for unitary matrices was solved in terms of
quantum Schubert calculus by Agnihotri--Woodward and independently by Belkale
\cite{AgnihotriWoodward,Belkale}.  Our main observation is that a categorical
$B_3$ representation converts the candidate $R$-spectrum into one of these
finite product problems.  Thus a violation of a multiplicative Horn
inequality is a categorical obstruction, even though the inequality itself is
formulated entirely in terms of unitary spectra.

\subsection{Precise scope}
Theorem~\ref{thm:main} is a non-realizability theorem for \emph{unitary}
modular tensor categories and does not assume a particular associator.  It is
not, however, a theorem of absolute
non-categorifiability.  In a non-unitarizable ribbon fusion category, the
finite-order matrices obtained from the braid relation need not be unitary for
a common positive-definite Hermitian form.  The multiplicative Horn polytope
is then no longer a necessary condition.  We leave that non-unitary problem
open.

\section{The candidate datum and conventions}\label{sec:data}

Set
\begin{equation}\label{eq:deltaE}
 \delta=\frac{9+3\sqrt{13}}2,
 \qquad E=\delta+2=\frac{13+3\sqrt{13}}2.
\end{equation}
Then
\begin{equation}\label{eq:relations}
 \delta^2=9(\delta+1),\qquad E\overline E=13.
\end{equation}
We order the labels as
\begin{equation}\label{eq:labels}
 L=(A,B,C_{[10]},C_{[01]},C_{[11]},C_{[12]},D_1,\ldots,D_6),
\end{equation}
where $A=\mathbf1$.  Their dimensions are
\begin{equation}\label{eq:dims}
 (1,\delta+1,E,E,E,E,\delta,\ldots,\delta).
\end{equation}
Consequently,
\begin{equation}\label{eq:globaldim}
 \Dim(\calB_{\mathrm{cand}})
 =1+(\delta+1)^2+4E^2+6\delta^2
 =9E^2=\frac{1287+351\sqrt{13}}2.
\end{equation}

The metric-group input is
\begin{equation}\label{eq:metricinput}
 (\F_3^2,q_{\mathrm{an}}),\quad
 q_{\mathrm{an}}(x)=\zeta_3^{x_1^2+x_2^2},
 \qquad
 (\Z_{13},q'_1),\quad q'_1(\gamma)=\zeta_{13}^{\gamma^2}.
\end{equation}
We use the inverse-monodromy convention for $S$.  The twists are
\begin{equation}\label{eq:twists}
 \theta_A=\theta_B=1,\qquad
 \theta_{C_{[k]}}=q_{\mathrm{an}}(k)^{-1},\qquad
 \theta_{D_\gamma}=q'_1(\gamma).
\end{equation}
All matrix identities below are evaluated in $\Q(\zeta_{39})$ unless otherwise
stated.  The exact checks include the modular relations and all $12^3=1728$
Verlinde coefficients.  These facts establish an abstract modular datum with a
nonnegative integral based ring, not a modular category.

In the Grothendieck ring, the fusion rule needed most often is
\begin{equation}\label{eq:Bsquare}
 [B]^2=[A]+[B]+\sum_{i=1}^4[C_i]+\sum_{j=1}^6[D_j].
\end{equation}
In particular, every channel in $B\otimes B$ has multiplicity one.

\section{Braid eigenvalues from generalized indicators}\label{sec:indicators}

For the direct-monodromy convention, the generalized indicator formula of
Ng--Rowell--Wen reads
\begin{equation}\label{eq:indicator}
 \nu_{2,1}^{\iota(c)}(x)
 =\frac1{\Dim(\calB)}\sum_{k,l}d_k\,
 \overline{S^{\mathrm{dir}}_{c,l}}N_{kl}^{\ x}
 \frac{\theta_k^2}{\theta_l^2}.
\end{equation}
Our stored matrix uses inverse monodromy, so
$\overline{S^{\mathrm{dir}}}=S^{\mathrm{inv}}$.  In a multiplicity-one
channel $N_{xx}^{\ c}=1$, choose a square root of $\theta_c$.  Then
\begin{equation}\label{eq:Rsign}
 R_{x,x}^{c}=\varepsilon_{x,c}\frac{\sqrt{\theta_c}}{\theta_x},
 \qquad
 \varepsilon_{x,c}=
 \frac{\nu_{2,1}^{\iota(c)}(x)}{\sqrt{\theta_c}}\in\{\pm1\}.
\end{equation}
Changing the square-root convention changes the displayed sign and root
together, not the resulting eigenvalue.

\begin{proposition}\label{prop:Rsigns}
In the order~\eqref{eq:labels}, the normalized signs in the $B\otimes B$
channels are
\begin{equation}\label{eq:Rsigns}
 (\varepsilon_{B,c})_{c\in L}
 =(1,1,1,1,-1,-1,1,-1,1,1,-1,-1).
\end{equation}
In particular, $R_{B,B}^{A}=R_{B,B}^{B}=1$.
\end{proposition}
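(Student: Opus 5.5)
The plan is to evaluate the indicator formula \eqref{eq:indicator} with $x=B$, once for each channel $c$ with $N_{BB}^{\ c}=1$. By \eqref{eq:Bsquare} this means all twelve labels. We then divide by the chosen square root $\sqrt{\theta_c}$, as in \eqref{eq:Rsign}. Since $\theta_B=1$, the resulting sign $\varepsilon_{B,c}$ also gives $R^c_{B,B}=\varepsilon_{B,c}\sqrt{\theta_c}$ directly.

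We use $\overline{S^{\mathrm{dir}}}=S^{\mathrm{inv}}$, so the sum becomes
\[
 \nu_{2,1}^{\iota(c)}(B)=\frac1{9E^2}\sum_{l}S^{\mathrm{inv}}_{c,l}\,\theta_l^{-2}\sum_k d_k N_{kl}^{\ B}\theta_k^2 .
\]
The computation is organized in three steps.

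\textbf{Step 1: the inner sums.} For each $l$ we compute $\sum_k d_kN_{kl}^{\ B}\theta_k^2$. By Frobenius reciprocity, $N_{kl}^{\ B}=N_{Bl}^{\ k}$ when the labels are self-dual. This reduces the work to knowing the rows $B\otimes l$ of the fusion ring. For $l=B$ the row is \eqref{eq:Bsquare}. For the other rows we use the Verlinde coefficients that the exact checks of Section~\ref{sec:data} already verified. We group the terms by label type $A,B,C_{[k]},D_\gamma$. The twist-squares are then sums of powers of $\zeta_3$ and of $\zeta_{13}^{2\gamma^2}$, so the inner sums collapse to quadratic Gauss sums over $\F_3^2$ and $\Z_{13}$. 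These evaluate in closed form in $\Q(\sqrt{-3},\sqrt{13})$.

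\textbf{Step 2: the outer contraction.} We contract against the $c$-th row of $S^{\mathrm{inv}}$. Here we use the Grossman--Izumi form of $S$: the $C$-block is built from characters of $\F_3^2$, the $D$-block from cosines $\zeta_{13}^{2\gamma\gamma'}+\zeta_{13}^{-2\gamma\gamma'}$, and the $A,B$ rows are dimension-type. The sum again reduces to finite character sums. Every entry lies in $\Q(\zeta_{39})$, so the whole computation is exact.

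\textbf{Step 3: reading off the signs.} We divide each result by $\sqrt{\theta_c}$. We fix $\sqrt{\theta_c}$ as the canonical root of $\theta_c$ of order dividing $3$ or $13$:
\begin{itemize}
 \item for $\theta_c=\zeta_3^{\pm1}$ we take $\zeta_3^{\mp1}$;
 \item for $\theta_{D_\gamma}=\zeta_{13}^{\gamma^2}$ we take $\zeta_{13}^{7\gamma^2}$.
\end{itemize}
We then check that each quotient equals $\pm1$. This check doubles as a consistency test, since a quotient outside $\{\pm1\}$ would show a convention error. For $c=A$ and $c=B$ we have $\theta_c=1$, and the quotient should equal $\nu_2(B)=1$ and $+1$ respectively. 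For $c=A$ this reflects that $B$ is self-dual and, presumably, real. This yields the second assertion $R^A_{B,B}=R^B_{B,B}=1$.

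The main obstacle is bookkeeping rather than any concept. Three things must stay consistent throughout: the inverse versus direct monodromy convention, the duality map $\iota$, and the choice of square roots. The six $D_\gamma$ signs $(1,-1,1,1,-1,-1)$ are the least structured part of the answer. They depend on the particular labelling $\gamma\mapsto D_\gamma$, which we take to be $\gamma=1,\dots,6$. For these entries we would first try to recognise the $D$-contribution as a product of a Legendre-symbol-type factor $\left(\frac{\cdot}{13}\right)$ and the chosen root. As a fallback, we would verify the sign numerically in $\Q(\zeta_{39})$ and record the result in the appendix certificates.
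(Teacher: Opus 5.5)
Your route is the one the paper relies on: evaluate \eqref{eq:indicator} exactly at $x=B$ in all twelve channels and divide by $\sqrt{\theta_c}$. The paper gives no hand derivation; the values come from the exact script \texttt{check\_rank12\_B3\_spectra.wls}. The problem is Step~3.

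The signs $\varepsilon_{B,c}$ are only defined relative to a choice of $\sqrt{\theta_c}$, and your explicit choice is not the one for which \eqref{eq:Rsigns} holds. Reading off \eqref{eq:Rexponents}, the eigenvalues are $R^{C}_{B,B}=\theta_C^{-1}$ and $R^{D_\gamma}_{B,B}=-\zeta_{13}^{7\gamma^2}$. Your roots are $\sqrt{\zeta_3^{\pm1}}=\zeta_3^{\mp1}$ and $\sqrt{\zeta_{13}^{a}}=\zeta_{13}^{7a}$. With these, the quotients are $+1$ on all four $C$-channels and $-1$ on all six $D$-channels. You would therefore obtain $(1,1,1,1,1,1,-1,-1,-1,-1,-1,-1)$, which disagrees with the statement at $C_{[11]},C_{[12]},D_1,D_3,D_4$. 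Your built-in check (quotient in $\{\pm1\}$) passes under either convention, so it would not catch this.

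The displayed vector corresponds to the principal branch $\sqrt{e^{2\pi i\phi}}=e^{\pi i\phi}$ with $0\le\phi<1$. This gives $\sqrt{\zeta_3^{2}}=\zeta_3$, which agrees with you, but $\sqrt{\zeta_3}=\zeta_6=-\zeta_3^{2}$. It also gives $\sqrt{\zeta_{13}^{a}}=\zeta_{26}^{a}$ for $0\le a\le 12$, whereas your root is $\zeta_{13}^{7a}=(-1)^a\zeta_{26}^{a}$.

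Under the principal branch, $\varepsilon_{B,C}=-1$ exactly when $\theta_C=\zeta_3$. Since $-\zeta_{13}^{7a}=(-1)^{a+1}\zeta_{26}^{a}$, one also gets $\varepsilon_{B,D_\gamma}=(-1)^{a+1}$, where $a\in\{1,4,9,3,12,10\}$ is the least residue of $\gamma^2$ mod $13$. This yields $(1,-1,1,1,-1,-1)$.

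So the ``unstructured'' $D$-pattern is a parity artifact of the branch choice. It happens to coincide with $(\gamma/13)$, but your proposed Legendre-symbol recognition would not recover it: under your normalization the $D$-quotient is constantly $-1$.

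Once you normalize by the principal branch in Step~3, the rest of your plan is the paper's computation. The second assertion $R^A_{B,B}=R^B_{B,B}=1$ is unaffected, since both conventions take $\sqrt1=1$.
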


The last two equalities show that commutativity on the prospective
$A\oplus B$ multiplication channels is not obstructed at this order.  The
higher braid relation is more restrictive.  Writing
$\zeta_{78}=e^{2\pi i/78}$, the actual braid eigenvalues are
\begin{equation}\label{eq:Rexponents}
 (R_{B,B}^{c})_{c\in L}
 =\bigl(\zeta_{78}^{r_c}\bigr)_{c\in L},\qquad
 (r_c)=(0,0,26,26,52,52,3,51,27,9,75,69).
\end{equation}

\section{The multiplicative Horn obstruction}\label{sec:horn}

For a simple object $t$, define
\begin{equation}\label{eq:Vt}
 V_t=\operatorname{Hom}(t,B^{\otimes3}).
\end{equation}
The fusion tensor gives
\begin{equation}\label{eq:Vdims}
 (\dim V_t)_{t\in L}=(1,12,12,12,12,12,10,10,10,10,10,10).
\end{equation}
In the fusion-path basis, the eigenvalues of
$A_1=\rho_t(\sigma_1)$ are $R_{B,B}^{c}$ with multiplicities
$N_{cB}^{\ t}$.  Since $\sigma_1$ and $\sigma_2$ are conjugate in $B_3$, the
same is true for $\rho_t(\sigma_2)$.

\begin{lemma}[Central reduction]\label{lem:central}
Let $A_1=\rho(\sigma_1)$ and $A_2=\rho(\sigma_2)$ be a unitary
representation of $B_3$ on $V_t$.  Put
\[
 X=A_1A_2A_1,\qquad Y=A_1A_2.
\]
Then $X^2=Y^3=\theta_tI$.  Given $\alpha^2=\theta_t$ and
$\beta^3=\theta_t$, set
\[
 Q=\alpha^{-1}X,\qquad P=\beta^{-1}Y,\qquad
 \lambda=\alpha\beta^{-1}.
\]
Then
\begin{equation}\label{eq:PQ}
 Q^2=P^3=I,\qquad A_1=\lambda P^{-1}Q,\qquad
 \lambda^6=\theta_t.
\end{equation}
Conversely, unitary $P,Q$ satisfying~\eqref{eq:PQ} reconstruct a unitary
$B_3$ representation with the indicated central character.
\end{lemma}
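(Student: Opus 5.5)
The plan has three parts: the central identities $X^2=Y^3=\theta_t I$, the algebraic rewriting of $A_1$ in terms of $P$ and $Q$, and the converse reconstruction.

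\textbf{Central identities.} In $B_3$, the braid relation $\sigma_1\sigma_2\sigma_1=\sigma_2\sigma_1\sigma_2$ gives
\[
(\sigma_1\sigma_2\sigma_1)^2=(\sigma_1\sigma_2)^3=\Delta^2 .
\]
The element $\Delta^2$ generates the center of $B_3$. On $V_t=\operatorname{Hom}(t,B^{\otimes3})$ the full twist acts by the scalar
\[
\theta_t\,\theta_B^{-3}=\theta_t ,
\]
using $\theta_B=1$ from \eqref{eq:twists}. This comes from the ribbon identity for the twist of a tensor product, $\theta_{B^{\otimes3}}=(\theta_B\otimes\theta_B\otimes\theta_B)\circ c^2_{\text{full}}$, restricted to the $t$-isotypic component. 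Equivalently, one can run the standard computation in the fusion-path basis. This is the one place where categorical input enters, so I will state the convention explicitly: with $A_i$ the braiding, the full twist equals $\theta_{B^{\otimes3}}\theta_B^{-3}$, which acts on $\operatorname{Hom}(t,B^{\otimes3})$ by $\theta_t$. Hence
\[
X^2=Y^3=\theta_t I .
\]

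\textbf{Rewriting $A_1$.} The relations $Q^2=\alpha^{-2}X^2=I$ and $P^3=\beta^{-3}Y^3=I$ are immediate. Next,
\[
Y^{-1}X=(A_1A_2)^{-1}A_1A_2A_1=A_1 ,
\]
so
\[
A_1=(\beta P)^{-1}(\alpha Q)=\alpha\beta^{-1}P^{-1}Q=\lambda P^{-1}Q .
\]
Finally,
\[
\lambda^6=\alpha^6\beta^{-6}=\theta_t^3\,\theta_t^{-2}=\theta_t .
\]
Unitarity of $P$ and $Q$ follows because $|\alpha|=|\beta|=1$: here $\theta_t$ is a root of unity and $X,Y$ are unitary.

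\textbf{Converse.} Given unitary $P,Q$ with $Q^2=P^3=I$, and $\lambda$ with $\lambda^6=\theta_t$, choose $\alpha,\beta$ with $\lambda=\alpha\beta^{-1}$, $\alpha^2=\theta_t$ and $\beta^3=\theta_t$. For instance, take any $\beta$ with $\beta^3=\theta_t$ and set $\alpha=\lambda\beta$; then
\[
\alpha^2=\lambda^2\beta^2 ,
\]
and we need this to equal $\theta_t$. Since $\lambda^6=\theta_t=\beta^3$, the quantity $(\lambda^2/\beta)^3=1$. So $\beta$ can be re-chosen among the cube roots of $\theta_t$ to make $\lambda^2=\beta$, and then $\alpha^2=\beta^3=\theta_t$.

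Now set $X=\alpha Q$, $Y=\beta P$, and
\[
A_1=Y^{-1}X ,\qquad A_2=Y A_1^{-1}=Y^2X^{-1}.
\]
Then $A_1A_2=Y$ and
\[
A_1A_2A_1=YY^{-1}X=X .
\]
We must check the braid relation $A_2A_1A_2=X$. Using $X^2=Y^3=\theta_t$ (central), compute
\[
A_2A_1A_2=Y^2X^{-1}\,Y^{-1}X\,Y^2X^{-1}.
\]
I expect this verification to be the only delicate algebraic step. The cleanest route is the standard isomorphism
\[
B_3\cong\langle x,y\mid x^2=y^3\rangle ,\qquad x=\sigma_1\sigma_2\sigma_1,\quad y=\sigma_1\sigma_2 ,
\]
with inverse map $\sigma_1=y^{-1}x$ and $\sigma_2=x^{-1}y^2$. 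Since $X^2=Y^3$ holds for our matrices, the pair $(X,Y)$ defines a representation of $B_3$, automatically satisfying the braid relation. It is unitary because $X,Y$ are, and its central character is $X^2=\theta_t$.

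Note that the converse's $A_2=x^{-1}y^2$ agrees with $Y^2X^{-1}$ because $X^2$ and $Y^3$ are central. I will cite this presentation of $B_3$ rather than expand the words by hand.
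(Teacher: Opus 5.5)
Your forward direction is the paper's argument: the full twist acts by $\theta_t\theta_B^{-3}=\theta_t$, and $Y^{-1}X=A_1$. In the converse, your explicit choice $\beta=\lambda^2$, $\alpha=\lambda^3$ fills in a point the paper leaves implicit.

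\textbf{The error.} The converse's formula for $A_2$ is wrong, and the step that uses it fails.
\begin{enumerate}
\item From $A_1A_2=Y$ one gets $A_2=A_1^{-1}Y=X^{-1}Y^2$, not $YA_1^{-1}$. Moreover $YA_1^{-1}=YX^{-1}Y$, which differs from $Y^2X^{-1}$ unless $X$ and $Y$ commute.
\item Since $X^2=Y^3=\theta_tI$ is scalar, $Y^2X^{-1}=\theta_tY^{-1}\cdot\theta_t^{-1}X=Y^{-1}X=A_1$. So your $A_2$ coincides with $A_1$, and $A_1A_2=A_1^2\neq Y$ in general. This already holds in $B_3$: $y^2x^{-1}=\sigma_1\sigma_2\sigma_1\sigma_2\,\sigma_2^{-1}\sigma_1^{-1}\sigma_2^{-1}=\sigma_1$.
\item For the same reason your closing remark is false. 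After cancelling scalars, $X^{-1}Y^2=XY^{-1}$ while $Y^2X^{-1}=Y^{-1}X$, and these agree only when $X$ and $Y$ commute.
\end{enumerate}
As written, then, the proposal asserts a reconstruction that sends $\sigma_2$ to $A_1$.

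\textbf{The repair.} The presentation $\langle x,y\mid x^2=y^3\rangle$ with $\sigma_1\mapsto y^{-1}x$ and $\sigma_2\mapsto x^{-1}y^2$ that you cite is correct. It is exactly the paper's choice $A_2=X^{-1}Y^2$, so delete the formula $A_2=YA_1^{-1}=Y^2X^{-1}$ and the reconciliation remark. With the correct $A_2$, the braid relation needs no citation and is not delicate. First, $A_1A_2=Y^{-1}XX^{-1}Y^2=Y$, hence $A_1A_2A_1=X$. Then
\[
A_2A_1A_2=X^{-1}Y^2\,Y^{-1}X\,X^{-1}Y^2=X^{-1}Y^3=X^{-1}X^2=X=A_1A_2A_1 .
\]
This is the paper's ``direct substitution''. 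It uses only $X^2=Y^3$, and the central character is $X^2=\alpha^2Q^2=\theta_tI$.
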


\begin{proof}
The braid relation gives
$(\sigma_1\sigma_2\sigma_1)^2=(\sigma_1\sigma_2)^3$.  In a ribbon
category this central full twist acts on
$\operatorname{Hom}(t,B^{\otimes3})$ as
$\theta_t\theta_B^{-3}=\theta_t$, because $\theta_B=1$.  Moreover
$Y^{-1}X=A_1$, proving the forward implication.  Conversely set
$X=\alpha Q$, $Y=\beta P$, and define
$A_1=Y^{-1}X$ and $A_2=X^{-1}Y^2$.  Direct substitution gives the braid
relation and the claimed central character.
\end{proof}

We now take $t=C_{[10]}$.  The twelve eigenvalue exponents of $A_1$ are
\begin{equation}\label{eq:C10spectrum}
 (0,26,26,26,52,52,3,51,27,9,75,69)\pmod{78},
\end{equation}
and $\theta_t=\zeta_{78}^{52}$.  There are six choices of $\lambda$ in
Lemma~\ref{lem:central}.  For each choice let $D=A_1/\lambda$.  A necessary
factorization is
\begin{equation}\label{eq:triple}
 P^{-1}QD^{-1}=I.
\end{equation}
The eigenvalues of $P$ lie in $\{1,\omega,\omega^2\}$ and those of $Q$ lie in
$\{1,-1\}$.  Write $(m_0,m_1,m_2)$ for the corresponding multiplicities of
$P$, write $\nu$ for the multiplicity of $-1$ for $Q$, and enumerate the six
roots as $\lambda_b=\zeta_6^b\lambda_0$, $b\in\Z/6\Z$.  The determinant
identity in~\eqref{eq:triple} leaves 1302 tuples $(b,\nu;m_0,m_1,m_2)$, but the
following symmetry removes the apparent factor of six.

\begin{lemma}[Branch symmetry]\label{lem:branchsymmetry}
The transformation
\begin{equation}\label{eq:branchaction}
 (b,\nu;m_0,m_1,m_2)
 \longmapsto (b+1,12-\nu;m_1,m_2,m_0)
\end{equation}
preserves solvability of~\eqref{eq:triple}.  It defines a free $\Z/6\Z$-action
on the determinant-compatible tuples.  Consequently it is enough to test the
217 tuples with $b=0$.
\end{lemma}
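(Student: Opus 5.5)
The plan is to realize the transformation~\eqref{eq:branchaction} by an explicit operation on solutions of~\eqref{eq:triple}. I will rescale $Q$ by $-1$ and $P$ by a cube root of unity, and then check that the labels change as claimed. Throughout, $\omega=e^{2\pi i/3}$ and $\zeta_6=e^{2\pi i/6}$, so that $\zeta_6^{-1}=-\omega$.

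\emph{Step 1: the solution map.} Suppose unitary $P,Q$ satisfy $P^3=Q^2=I$ and $P^{-1}Q=D_b$, where $D_b=A_1/\lambda_b$. Since $\lambda_{b+1}=\zeta_6\lambda_b$, we have $D_{b+1}=\zeta_6^{-1}D_b=-\omega D_b$. Set
\[
 Q'=-Q,\qquad P'=\omega^{2}P .
\]
Both matrices are still unitary, and $Q'^2=Q^2=I$, $P'^3=\omega^6P^3=I$. Moreover
\[
 P'^{-1}Q'=\omega^{-2}P^{-1}(-Q)=-\omega P^{-1}Q=D_{b+1}.
\]
So $(P',Q')$ solves~\eqref{eq:triple} on branch $b+1$. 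The same recipe with $\omega^2$ replaced by $\omega$ and $-1$ kept gives the inverse map from branch $b+1$ back to branch $b$. Hence solvability is preserved in both directions.

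\emph{Step 2: the effect on multiplicities.} The $-1$-eigenspace of $Q'$ is the $+1$-eigenspace of $Q$, so $\nu'=12-\nu$. Multiplication by $\omega^2$ sends the eigenvalues of $P$ as $1\mapsto\omega^2$, $\omega\mapsto 1$, $\omega^2\mapsto\omega$. Therefore $m_0'=m_1$, $m_1'=m_2$ and $m_2'=m_0$, which is exactly~\eqref{eq:branchaction}.

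\emph{Step 3: determinant compatibility.} The determinant condition is the single identity $\det(P)^{-1}\det(Q)=\det(D_b)$, read off from the labels. Both sides transform consistently:
\[
 \det(P'^{-1}Q')=(-\omega)^{12}\det(P^{-1}Q),\qquad \det D_{b+1}=(-\omega)^{12}\det D_b .
\]
Hence the map sends determinant-compatible tuples to determinant-compatible tuples, and so does its inverse. I would double-check this directly on the labels using $\det D_b$ computed from~\eqref{eq:C10spectrum}, but it is a routine congruence mod~78.

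\emph{Step 4: a free $\Z/6\Z$-action.} Applying the map six times sends $b\mapsto b+6=b$. It also returns $\nu$ to itself, since there are six (an even number of) flips $\nu\mapsto 12-\nu$. It returns $(m_0,m_1,m_2)$ to itself, since the cyclic shift has order~3 and $3\mid 6$. So the map generates a $\Z/6\Z$-action on the determinant-compatible tuples. The action is free because the $k$-th power shifts the $b$-coordinate by $k$, which is nonzero mod~6 for $k\not\equiv0$. Every orbit therefore has exactly six elements, one in each branch. Thus the $b=0$ tuples form a complete set of orbit representatives, and there are $1302/6=217$ of them.

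The only delicate point is sign bookkeeping. One has to choose the right cube root of unity so that $-1$ times it equals $\zeta_6^{-1}$, and verify that the resulting permutation of $(m_0,m_1,m_2)$ is the stated cyclic shift rather than its inverse. Steps 1 and 2 carry this check explicitly.
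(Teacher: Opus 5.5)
Your proposal is correct and follows essentially the same route as the paper: the rescaling $P'=\omega^2P$, $Q'=-Q$ realizes $D_{b+1}=\zeta_6^{-1}D_b$ and induces exactly the stated relabelling, with freeness coming from the shift in the branch coordinate. Your additional checks are details the paper leaves implicit and do not change the argument: that determinant compatibility is preserved, that the sixth power is the identity so the action is genuinely a $\Z/6\Z$-action, and that there is an explicit inverse map.
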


\begin{proof}
Since $D_{b+1}=\zeta_6^{-1}D_b$, a solution $P^{-1}Q=D_b$ gives a solution in
the next branch by setting
\[
 P'=\omega^2P,\qquad Q'=-Q.
\]
Indeed, $(P')^3=(Q')^2=I$ and
\[
 (P')^{-1}Q'=\omega^{-2}(-1)P^{-1}Q
 =\zeta_6^{-1}D_b=D_{b+1}.
\]
Multiplication by $\omega^2$ cyclically changes the $P$-multiplicities to
$(m_1,m_2,m_0)$, while multiplication of $Q$ by $-1$ changes $\nu$ to
$12-\nu$.
The branch coordinate increases by one, so every orbit has exactly six
elements.
\end{proof}

Multiplying each factor in~\eqref{eq:triple} by a scalar, we normalize it to
$SU(12)$.  Choosing a different twelfth root changes the triple by the standard
central action on the multiplicative Horn polytope, so it suffices to test one
representative of each central orbit.  Write the ordered alcove coordinates of
the three normalized factors as $u,v,w$.  The multiplicative Horn theorem
states that a triple with product $I$ exists if and only if, for every relevant
nonzero three-point genus-zero Gromov--Witten invariant of $G(r,12)$,
\begin{equation}\label{eq:AW}
 \sum_{i\in I}u_i+\sum_{j\in J}v_j+\sum_{k\in K}w_k\le d.
\end{equation}
Here our indexing convention is that the quantum product
$[\sigma_I]\star[\sigma_J]$ contains $q^d[\sigma_{*K}]$; the checker uses the
same Poincare-duality convention when converting partitions to subsets.

\Needspace{17\baselineskip}
We now fix the alcove representatives explicitly.  For a
rational vector $x=(x_1,\ldots,x_{12})$ and a scalar $s$, let
$p_1\ge\cdots\ge p_{12}$ be the decreasing rearrangement of the fractional
parts $\{x_i+s\}$.  Whenever
\[
 c=\sum_{i=1}^{12}p_i\in\Z,
\]
define
\begin{equation}\label{eq:alcovedef}
 \operatorname{Alc}(x;s)=\operatorname{sort}_{\downarrow}
 (p_1-1,\ldots,p_c-1,p_{c+1},\ldots,p_{12}).
\end{equation}
This is the unique decreasing $SU(12)$ alcove lift with zero sum and spread at
most one.  Superscripts on entries below denote repetition.  For $b=0$, put
\begin{align}
 U(m_0,m_1,m_2)&=(0^{m_0},(2/3)^{m_1},(1/3)^{m_2}),\label{eq:Uraw}\\
 V(\nu)&=((1/2)^{\nu},0^{12-\nu}),\label{eq:Vraw}\\
 W_0&=(1/9,(7/9)^3,(4/9)^2,17/234,107/234,179/234,
       233/234,35/234,53/234).\label{eq:Wraw}
\end{align}
Here $U,V,W_0$ are the raw eigenphases of $P^{-1},Q,D_0^{-1}$,
respectively; in particular the chosen phase of $\lambda_0$ is $1/9$.  Set
\begin{equation}\label{eq:scalars}
 s_u=-\frac{2m_1+m_2}{36},\qquad
 s_v=-\frac{\nu}{24},\qquad
 s_w=-s_u-s_v.
\end{equation}
For every determinant-compatible tuple with $b=0$, the coordinates used in
Equation~\eqref{eq:AW} are exactly
\begin{equation}\label{eq:explicituvw}
 u=\operatorname{Alc}(U(m_0,m_1,m_2);s_u),\qquad
 v=\operatorname{Alc}(V(\nu);s_v),\qquad
 w=\operatorname{Alc}(W_0;s_w).
\end{equation}

\begin{theorem}\label{thm:horn}
The spectrum~\eqref{eq:C10spectrum} is incompatible with a unitary
representation of $B_3$ having the ribbon central character
$\theta_{C_{[10]}}$.
\end{theorem}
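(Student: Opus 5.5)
We argue by contradiction and reduce everything to a finite list of multiplicative Horn tests. Suppose $\rho$ is a unitary representation of $B_3$ on the twelve-dimensional space $V_{C_{[10]}}$ with $\rho(\sigma_1)$ having spectrum~\eqref{eq:C10spectrum} and full twist acting by $\theta_{C_{[10]}}=\zeta_{78}^{52}$. By Lemma~\ref{lem:central} we obtain unitary $P,Q$ with $P^3=Q^2=I$ and $P^{-1}Q=D_b$ for one of the six branches $b$. The spectra of $P$ and $Q$ are then described by a tuple $(b,\nu;m_0,m_1,m_2)$ with $m_0+m_1+m_2=12$ and $0\le\nu\le12$. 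Taking determinants in~\eqref{eq:triple} imposes one congruence on this tuple, leaving the $1302$ determinant-compatible tuples. By Lemma~\ref{lem:branchsymmetry} it suffices to rule out the $217$ tuples with $b=0$. For each of these, the triple $(P^{-1},Q,D_0^{-1})$ has product $I$, and after rescaling into $SU(12)$ by the scalars~\eqref{eq:scalars} its alcove coordinates are exactly $u,v,w$ in~\eqref{eq:explicituvw}. The rescaling is consistent because $s_u+s_v+s_w=0$. The multiplicative Horn theorem~\eqref{eq:AW} then says existence forces every inequality indexed by a nonzero Gromov--Witten invariant of $G(r,12)$ to hold. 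It therefore suffices to exhibit, for each of the $217$ tuples, one such inequality that fails.

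First we apply the rank-one inequalities ($r=1$, $G(1,12)=\mathbb{P}^{11}$). Here quantum cohomology is explicit: $\sigma_a\star\sigma_b=q^{\lfloor (a+b)/12\rfloor}\sigma_{a+b \bmod 12}$. So the inequalities are $u_i+v_j+w_k\le d$ for index triples governed by $i+j+k$ modulo $12$, with the prescribed degree $d$. These form a finite explicit family, and we check each of the $217$ tuples against all of them in exact rational arithmetic. The alcove vectors have denominators dividing $936=\operatorname{lcm}(36,24,234)$, so the check is exact. We expect this step to leave exactly $17$ surviving tuples.

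For the $17$ survivors we use degree-zero inequalities on $G(2,12)$. For $d=0$ the Gromov--Witten invariant reduces to the classical Littlewood--Richardson coefficient $c_{\lambda\mu}^{\nu}$ for partitions in a $2\times10$ box. For two-row partitions these coefficients are given by a simple Pieri-type rule, so their nonvanishing is easy to certify. For each survivor we select a triple $(I,J,K)$ of $2$-subsets with $c\neq0$ and $\sum_I u+\sum_J v+\sum_K w>0$, and record the violating gap exactly. As an illustration, we carry one case through fully to show the gap $2/117$. We expect ten distinct inequalities to cover all $17$ cases.

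The main obstacle is getting the conventions exactly right, not the arithmetic. Three things must all match the statement of the Agnihotri--Woodward/Belkale theorem: the ordering of alcove coordinates, the Poincaré-duality convention converting $K$ into the dual Schubert class, and the normalization of $d$. We will validate these on known solvable instances. One check is the realized Haagerup-center twists, where the inequalities must all hold. Another is small cases such as $n=2$, where one can solve by hand. A convention error would either spuriously kill genuine solutions or fail to exclude anything, so we need these sanity checks before trusting the $217$-case computation.
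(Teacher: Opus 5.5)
Your proposal is correct and follows the paper's proof step for step. The steps are: central reduction, the determinant filter to $1302$ tuples, Lemma~\ref{lem:branchsymmetry} to pass to the $217$ tuples with $b=0$, normalization into $SU(12)$ via~\eqref{eq:scalars}, an exact rank-one sweep leaving $17$ survivors, and ten degree-zero Pieri inequalities on $G(2,12)$ (with the $2/117$ case as illustration). The one difference is that the outcomes you only "expect" are given in the paper as an explicit certificate (the survivor list and Tables~\ref{tab:compacthorn} and~\ref{tab:compactcoverage}), and that the paper fixes conventions by~\eqref{eq:alcovedef} and the subset--partition dictionary rather than by your proposed sanity checks.
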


\begin{proof}
By Lemma~\ref{lem:branchsymmetry}, fix $b=0$.  Evaluation of the rank-one
inequalities in exact rational arithmetic leaves the following 17 pairs
$(\nu;(m_0,m_1,m_2))$:
\[
\begin{array}{c|l}
\nu & (m_0,m_1,m_2)\\ \hline
6 &(3,6,3),(4,4,4),(5,2,5),(5,5,2),(6,3,3),(7,4,1)\\
8 &(1,7,4),(2,5,5),(3,3,6),(3,6,3),(4,1,7),(4,4,4),
    (5,2,5),(5,5,2),(6,3,3),(7,4,1)\\
10&(4,4,4).
\end{array}
\]

Only the ten rank-two inequalities in Table~\ref{tab:compacthorn} are needed
to exclude these cases.  In the last column, $a$ denotes the row number of
the inequality and the displayed fraction is its positive gap
$\textup{LHS}-d$ in~\eqref{eq:AW}.  Table~\ref{tab:compactcoverage} records a
witness for every surviving multiplicity tuple.  Thus the tables constitute
a compact exact cover, rather than an appeal to all 10,296 rank-two terms.

\begin{table}[ht]
\centering
\small
\caption{The ten rank-two Horn inequalities used in the compact certificate.
Every listed Littlewood--Richardson coefficient is one and every degree is
$d=0$.}
\label{tab:compacthorn}
\begin{tabular}{c@{\quad}c@{\quad}c@{\quad}c@{\qquad}l}
\toprule
$a$&$I$&$J$&$K$&classical Schubert product\\
\midrule
1 &$(2,12)$&$(6,12)$&$(5,12)$&$\sigma_{(9)}\sigma_{(5)}\supset\sigma_{(10,4)}$\\
2 &$(4,12)$&$(6,12)$&$(4,11)$&$\sigma_{(7)}\sigma_{(5)}\supset\sigma_{(9,3)}$\\
3 &$(4,12)$&$(8,12)$&$(3,10)$&$\sigma_{(7)}\sigma_{(3)}\supset\sigma_{(8,2)}$\\
4 &$(4,12)$&$(10,12)$&$(2,9)$&$\sigma_{(7)}\sigma_{(1)}\supset\sigma_{(7,1)}$\\
5 &$(5,10)$&$(8,12)$&$(5,9)$&$\sigma_{(6,2)}\sigma_{(3)}\supset\sigma_{(7,4)}$\\
6 &$(6,9)$&$(6,12)$&$(6,10)$&$\sigma_{(5,3)}\sigma_{(5)}\supset\sigma_{(8,5)}$\\
7 &$(6,9)$&$(8,12)$&$(5,9)$&$\sigma_{(5,3)}\sigma_{(3)}\supset\sigma_{(7,4)}$\\
8 &$(6,12)$&$(8,12)$&$(4,7)$&$\sigma_{(5)}\sigma_{(3)}\supset\sigma_{(5,3)}$\\
9 &$(7,11)$&$(8,12)$&$(4,7)$&$\sigma_{(4,1)}\sigma_{(3)}\supset\sigma_{(5,3)}$\\
10&$(8,12)$&$(8,12)$&$(3,6)$&$\sigma_{(3)}\sigma_{(3)}\supset\sigma_{(4,2)}$\\
\bottomrule
\end{tabular}
\end{table}

\begin{table}[ht]
\centering
\small
\caption{A violated inequality for each rank-one survivor.}
\label{tab:compactcoverage}
\begin{tabular}{c@{\qquad}c@{\qquad}c@{\qquad}c}
\toprule
$\nu$&$(m_0,m_1,m_2)$&$a$&gap\\
\midrule
6 &(3,6,3)&6&$2/117$\\
6 &(4,4,4)&2&$1/234$\\
6 &(5,2,5)&1&$1/234$\\
6 &(5,5,2)&2&$1/234$\\
6 &(6,3,3)&1&$1/234$\\
6 &(7,4,1)&2&$1/18$\\
8 &(1,7,4)&9&$1/18$\\
8 &(2,5,5)&5&$2/117$\\
8 &(3,3,6)&10&$5/117$\\
8 &(3,6,3)&7&$2/117$\\
8 &(4,1,7)&8&$5/117$\\
8 &(4,4,4)&10&$5/117$\\
8 &(5,2,5)&8&$5/117$\\
8 &(5,5,2)&3&$1/234$\\
8 &(6,3,3)&8&$5/117$\\
8 &(7,4,1)&3&$1/18$\\
10&(4,4,4)&4&$1/234$\\
\bottomrule
\end{tabular}
\end{table}

It follows that none of the 217 orbit representatives admits a unitary
factorization.  Lemma~\ref{lem:branchsymmetry} then excludes all 1302 original
tuples, so no unitary factorization~\eqref{eq:triple} exists.

\Needspace{9\baselineskip}
To illustrate the normalization and indexing conventions, expand row~6 for
the first entry of Table~\ref{tab:compactcoverage}.  In the zero-th $\lambda$
branch, take the
$(-1)$-multiplicity of $Q$ to be $6$, and take the
$(1,\omega,\omega^2)$-multiplicities of $P$ to be $(3,6,3)$.  The relevant
alcove coordinates satisfy
\[
 u_6+u_9=\frac16,\qquad v_6+v_{12}=0,\qquad
 w_6+w_{10}=-\frac{35}{234}.
\]
The Pieri product
\[
 \sigma_{(5,3)}\star\sigma_{(5)}\supset\sigma_{(8,5)}
\]
corresponds in our convention to
$(I,J,K,d)=((6,9),(6,12),(6,10),0)$.  Thus the left side of
Equation~\eqref{eq:AW} is
\[
 \frac16-\frac{35}{234}=\frac{2}{117}>0=d.
\]
\end{proof}

\begin{proof}[Proof of Theorem~\ref{thm:main}]
If a unitary modular tensor category realized the candidate datum, its ribbon
structure would supply the unitary $B_3$ representation on
$V_{C_{[10]}}$.  Proposition~\ref{prop:Rsigns} and the fusion rules force its
generator spectrum to be~\eqref{eq:C10spectrum}, while the ribbon full twist
forces the central character used in Lemma~\ref{lem:central}.  This contradicts
Theorem~\ref{thm:horn}.
\end{proof}

\section{The remaining non-unitary question}\label{sec:nonunitary}

The proof of Theorem~\ref{thm:main} uses a positive-definite Hermitian
structure twice: categorical unitarity makes the braid representation
unitary, and the Agnihotri--Woodward theorem describes products of unitary
conjugacy classes.  In a non-unitarizable ribbon category, $P^3=Q^2=I$ still
holds and each matrix is diagonalizable over $\C$, but $P$ and $Q$ need not be
unitary for one common positive-definite form.  Consequently the Horn
inequalities need not hold.

Two possible routes to absolute non-categorifiability remain:
\begin{enumerate}[label=(N\arabic*)]
 \item prove that no complex matrices with the prescribed semisimple
 conjugacy classes satisfy $D=P^{-1}Q$, $P^3=Q^2=I$; or
 \item eliminate all solutions of the pentagon and hexagon equations for the
 based ring without using positivity.
\end{enumerate}
Neither step is claimed here.

\section{Conclusion}

The candidate $12^{39,406}_{4,1276.}$ illustrates a sharp limitation of
genus-one consistency tests.  Its modular relations and Verlinde coefficients
pass exact checks.  Nonetheless, the twelve-dimensional three-strand
fusion-path representation for $t=C_{[10]}$ is impossible in a unitary
category.  The multiplicative Horn inequalities therefore give a finite,
exact obstruction to unitary categorification.  The non-unitary problem
remains open.

\appendix

\section{Exact certificates and reproducibility}\label{sec:certificates}

The accompanying code and source data are supplied as arXiv ancillary
files in the \texttt{anc/} directory of the complete source package.
The file \texttt{anc/README.md} records the dependencies, commands, and
expected outputs.  After extracting the complete package, change into
\texttt{anc/}; all paths and commands below are relative to that directory.
Keep the \texttt{certificates/} and \texttt{sources/} subdirectories together:
the scripts locate their inputs relative to their own file locations.
In particular, the original rank-twelve GAP table is retained at
\texttt{sources/2308.09670v2/anc/NsdGOL12.g}; only Galois orbit 59 is read.

The exact computations accompanying this work are separated according to
their logical roles.
\begin{enumerate}
 \item \texttt{check\_rank12\_B3\_spectra.wls} derives the fusion-path
 dimensions, the $R_{BB}$ exponents, and the determinant constraints.
 \item \texttt{check\_rank12\_B3\_quantum\_horn.py} enumerates the Horn
 inequalities using \texttt{lrcalc} version 2.1 \cite{lrcalc} and performs
 every comparison with Python \texttt{Fraction} objects.  The compact tables
 above are obtained by restricting that exact output to $b=0$ and retaining
 the ten displayed witnesses.
\end{enumerate}
The decisive command is
\begin{verbatim}
python certificates/check_rank12_B3_quantum_horn.py \
  --sector C10 --ranks 1,2
\end{verbatim}
and its required final summary is \texttt{survivor\_count: 0}.  A separate
floating-point optimization probe is diagnostic only and is not used in any
proof.

For publication-grade archival, the aggregate Horn output should be
accompanied by a machine-readable witness for every rejected branch and a
small independent verifier that checks the recorded quantum-product
coefficient and rational inequality.  The mathematical theorem uses only the
finite exact inequalities; no conclusion depends on a numerical tolerance.

\section*{Acknowledgments}

We thank Professor Zhengwei Liu for his guidance and valuable discussions.
The ideas underlying this article originated from GPT-5.6-Sol, which was also
used to assist with drafting and editing the manuscript. The authors
subsequently reviewed and verified the manuscript and take full responsibility
for its mathematical arguments and conclusions.

\end{document}